\title{Generalized Solid Angle Theory for Real Polytopes}
\author{David DeSario and Sinai Robins}
\def\real{{\mathbb R}}
\def\zed{{\mathbb Z}}
\def\C{{\mathbb C}}
\def\P{{\mathcal{P}}}
\def\K{{\mathcal{K}}}
\def\F{{\mathcal{F}}}
\def\G{{\mathcal{G}}}
\def\l{{\mathit{l}}}
\def\m{{\mathit{m}}}
\def\x{{\mathbf{x}}}
\def\v{{\mathbf{v}}}
\def\w{{\mathbf{w}}}
\def\y{{\mathbf{y}}}
\def\ep{{\epsilon}}
\newtheorem{thm}{Theorem}[section]
\newtheorem{cor}[thm]{Corollary}
\newtheorem{ex}{Example}[section]
\newtheorem{conj}{Conjecture}
\newtheorem{fact}{Fact}
\theoremstyle{definition}
\theoremstyle{remark}
\begin{document}
\maketitle

\section{Introduction}

The natural generalization of a two-dimensional angle to higher dimensions is called a \emph{solid angle}. Given a pointed cone $\K \subset \real^d$, the solid angle at its apex is the proportion of space that the cone $\K$ occupies.  Alternatively, a solid angle can be thought of as the volume of a spherical polytope.     Ian Macdonald initiated the systematic study of solid-angle sums in integral polytopes with his 1971 paper \cite{Macdonald} and currently there is a resurgence of activity on solid angles; see, for example,  \cite{Camenga}, \cite{Ribando}, and
\cite{Sato}.    The theory of solid angles of polyhedra, which parallels that of integer-point enumeration known as Ehrhart theory, can be found in Chapter 11 of \cite{Sinai} by Beck and Robins.    Macdonald's solid angle sums give us a new measure of discrete volume, and they find applications in the Ehrhart theory of polytopes.

\medskip
In this paper, we extend many theorems from \cite{Sinai}, which hold true for for  \emph{rational}  polytopes, to results for \emph{real} polytopes that  also involve more general solid angles.  A rational polytope is a polytope whose vertices all have rational coordinates, whereas a real polytope is a polytope whose vertices all have arbitrary real coordinates.     Our generalized solid angles are defined using the $l^p$-norm, and in particular include the $l^1$-norm, which gives solid angles that are themselves polyhedral and hence easily computable.

\medskip
The proofs we give here rely on Harmonic Analysis and therefore do not resemble the proofs in \cite{Sinai}, which are combinatorial in nature. Furthermore, it is the power of Harmonic Analysis that allows us to extend our results to all real convex polytopes $\P$ and to all real dilations of $\P$.

\medskip
We note that solid-angle theory for real polytopes is still in its infancy, primarily due to the considerable increase in difficulty associated with the study of polyhedra with irrational vertices. Precise enumeration theorems for real polytopes are hard to come by,  and thus the main contribution of this paper to solid-angle theory is the extension of several fundamental theorems to \emph{real} polytopes.

\section{Definitions and background material}

A \textbf{convex polytope} $\P \subseteq \real^d$ is the bounded intersection of finitely many half-spaces and hyperplanes.  If $\P$ is of dimension $d$, we call it a $d$-polytope.  A \textbf{convex cone} $\K \subseteq \real^d$ is the intersection of finitely many half-spaces of the form $\{\textbf{x} \in \real^d | \, \textbf{a} \cdot \textbf{x} \leq b\}$ whose corresponding hyperplanes $\{\textbf{x} \in \real^d | \, \textbf{a} \cdot \textbf{x} = b\}$ meet in at least one point. A cone is called \textbf{pointed} if the defining hyperplanes meet in exactly one point.  Throughout this paper, the word cone will always refer to a pointed cone.

Suppose $\P \subset \real^d$ is a convex $d$-polytope. Then the \textbf{solid angle} $\omega_{\P}(\mathbf{x})$  of a point $\x$ (with respect to $\P$) equals the proportion of a small ball centered at $\mathbf{x}$ that is contained in $\P$. Thus, for all positive $\ep$ sufficiently small,
\begin{equation}\label{usualsolid}
 \omega_{\P} (\mathbf{x})= \frac{\textrm{vol} (B_{\ep} (\mathbf{x})
  \cap \P)}{\textrm{vol}\, B_{\ep} (\mathbf{x})},   
\end{equation}
where $B_{\ep}(\x)$ is the ball of radius $\ep$ centered at $\x$.  We now generalize our definition of a solid angle by considering balls with respect to $l^p$-norm for any $p \geq 1$.
Given $\mathbf{x}= (x_1, x_2, \dots, x_d) \in \real^d$, the $l^p$-norm of $\mathbf{x}$ is defined by
\[
\| \, \mathbf{x}\|_p = \left(| \, x_1|^p+| \, x_2|^p+\dots+| \, x_d|^p \right)^{1/p}, \ \ \ \textrm{for}\  p \geq 1.
\]
The ball with respect to $l^p$-norm of radius $\ep$ centered at $\mathbf{x}$ is the set
\[
B_{p, \, \ep}(\mathbf{x}) := \{ \mathbf{y} \in \real^d : \ \ \| \, \mathbf{x-y} \|_p < \ep \ \}.
\]
For any convex $d$-polytope $\P \subset \real^d$, the \textbf{\boldmath $l^p$-solid angle} of a point $\x$, denoted by $\omega_{p, \, \P}(\mathbf{x})$, is the proportion of a small $l^p$-ball centered at $\x$ that is contained in $\P$. That is
\begin{equation}\label{refinedsolid}
\omega_{p, \, \P} (\x)= \frac{\textrm{vol} (B_{p, \, \ep} (\x)  \cap \P)}{\textrm{vol}\, B_{p, \, \ep} (\x)},
\end{equation}
for all positive $\ep$ sufficiently small.

Given a cone $\K \subset \real^d$, we also have the following integral definition of a general solid angle with respect to $\K$.  For $\x \in \real^d$ and $p \geq 1$, the \textbf{\boldmath $l^p$-solid angle} of $\x$ with respect to $\K$ is given by

\large
\begin{eqnarray}\label{analyticomega}
\omega_{p, \, \K} (\x) &:=& \lim_{\epsilon \to 0^+} \frac{1}{\epsilon^{d/p}} \int_{\K} e^{\frac{-\,c}{\ep} \| t - \x \|_p^p} dt \label{analyticomega} \\
\nonumber &=& \lim_{\epsilon \to 0^+} \frac{1}{\epsilon^{d/p}} \int_{\K} e^{\frac{-\,c}{\ep} \left( |t_1 -x_1|^p + |t_2 -x_2|^p + \cdots +|t_d -x_d|^p \right)} dt,
\end{eqnarray}
\normalsize
where $c = \left(2 \, \Gamma \left( \frac{1}{p}+1 \right) \right)^p$.  This definition arises from centering at $\x$ a Gaussian function with respect to the $l^p$-norm that is normalized to have a total mass of 1 and then integrating to calculate the proportion of mass contained in $\K$. This definition of $\omega_{p, \, \K} (\x)$ is more analytic in nature, as opposed to geometric, and it opens the door to the Harmonic Analysis techniques that will be used below.

For $\ep >0$, $p \geq 1$, and $t \in \real^d$ we define
\large
\begin{equation}\label{phi_ep}
\phi_{\ep} (t) := \frac{1}{\epsilon^{d/p}} \, e^{\frac{-\,c}{\ep} \| t \|_p^p}. 
\end{equation}
\normalsize
Notice that $\phi_{\ep}(-t) = \phi_{\ep}(t)$, by the properties of the $l^p$-norm, so that
the integral in equation (\ref{analyticomega}) becomes a convolution as follows:

\begin{eqnarray*}
\omega_{p, \, \K} (\x) &=& \lim_{\epsilon \to 0} \int_{\K} \phi_{\ep} (t - \x)dt \\ &=& \lim_{\epsilon \to 0} \int_{\K} \phi_{\ep} (\x - t)dt \\
&=& \lim_{\epsilon \to 0} \int_{\real^d} 1_{\K}(t) \, \phi_{\ep} (\x - t)dt \\ &=& \lim_{\epsilon \to 0} \left( 1_{\K} \ast \phi_{\ep} \right) (\x).
\end{eqnarray*}
The last equality follows from the definition of the convolution.  This fact will be used a great deal, so we highlight it here:
\begin{fact}\label{fact1}
\[
 \omega_{p, \, \K} (\x) =  \lim_{\epsilon \to 0} \left( 1_{\K} \ast \phi_{\ep} \right) (\x), \ \ \textrm{for all } \ \x \in \real^d.
\]
\end{fact}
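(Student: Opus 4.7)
The plan is to start from the analytic definition of $\omega_{p, \, \K}(\x)$ in equation (\ref{analyticomega}) and to rewrite the integrand as a translate of $\phi_\epsilon$, so that after extending the integration to all of $\real^d$ by an indicator function, the integral becomes an honest convolution evaluated at $\x$. First I would substitute the definition of $\phi_\epsilon$ from equation (\ref{phi_ep}) to recognize
\[
\frac{1}{\epsilon^{d/p}} e^{\frac{-c}{\epsilon}\|t-\x\|_p^p} = \phi_\epsilon(t - \x),
\]
which turns the definition into $\omega_{p,\, \K}(\x) = \lim_{\epsilon \to 0^+} \int_\K \phi_\epsilon(t - \x)\, dt$.

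Next I would use the fact that $\phi_\epsilon$ is an even function. This is where one has to pause and check a small but essential point: because $\|t\|_p^p = |t_1|^p + \cdots + |t_d|^p$ is invariant under $t_i \mapsto -t_i$, we indeed have $\phi_\epsilon(-u) = \phi_\epsilon(u)$ for every $u \in \real^d$. Applying this with $u = t - \x$ gives $\phi_\epsilon(t-\x) = \phi_\epsilon(\x-t)$, so that
\[
\omega_{p,\, \K}(\x) = \lim_{\epsilon \to 0^+} \int_\K \phi_\epsilon(\x - t)\, dt.
\]

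Finally I would extend the domain of integration from $\K$ to all of $\real^d$ by inserting the indicator function $1_\K$, rewriting the integral as $\int_{\real^d} 1_\K(t)\, \phi_\epsilon(\x - t)\, dt$, which is precisely the convolution $(1_\K \ast \phi_\epsilon)(\x)$ by definition. Passing to the limit completes the identification.

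There is no serious obstacle here, as the entire argument is a chain of definitional rewritings. The one step that genuinely requires justification is the evenness of $\phi_\epsilon$; once that is in hand, the rest is bookkeeping. No integrability or interchange-of-limit issue arises at this stage because the limit in $\epsilon$ is kept outside the integral throughout, and the convolution is well-defined for each fixed $\epsilon > 0$ since $\phi_\epsilon$ is integrable and $1_\K$ is locally bounded.
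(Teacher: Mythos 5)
Your proposal is correct and follows exactly the paper's own derivation: recognize the integrand as $\phi_\epsilon(t-\x)$, use the evenness $\phi_\epsilon(-u)=\phi_\epsilon(u)$ coming from the $l^p$-norm, insert the indicator $1_\K$ to extend the integral to $\real^d$, and identify the result as the convolution $(1_\K \ast \phi_\epsilon)(\x)$ before taking the limit. No further comment is needed.
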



The \textbf{integer-point transform} of a polytope $\P \subset \real^d$, given by
\begin{equation}
\sigma_{\P}(\mathbf{z}):= \sum_{m \in \P \cap \zed^d} \mathbf{z}^{m},
\end{equation}
is a multivariate generating function that lists all integer points in $\P$ as a formal sum of monomials.  This special format encodes information about the integer points in a way that allows us to use both algebraic and analytic techniques to study the discrete geometry of polyhedra.  By analogy, we form the \textbf{solid-angle generating function} for a polytope $\P$
\begin{equation}
    \alpha_{\P}(\mathbf{z}):= \sum_{m \in \, \P \cap \, \zed^d} \omega_\P (m) \mathbf{z}^{m},
\end{equation}
where $\omega_\P(m)$ is the usual solid angle, defined by (\ref{usualsolid}).

In order to employ the methods of Harmonic Analysis, we often need to consider functions of a complex variable.  For this reason, we redefine $\alpha_\P$ using the substitution $z_k = e^{2 \pi i s_k}$ for each $k= 1, \dots, d$,  so  $\mathbf{z}^{m} = e^{2 \pi i \langle s, \, m\rangle}$ and we obtain
\begin{equation}\label{alpha}
    \alpha_{\P}(s):= \sum_{m \in \, \P \cap \, \zed^d} \omega_\P (m) e^{2 \pi i \langle s, \, m\rangle}, \ \ \ \textrm{for } s \in \C^d.
\end{equation}
This substitution will prove essential when we use the following Poisson summation formula:  If $f$ is a ``sufficiently nice'' function (for example, a function which is $L^1$ and continuous, and has a Fourier transform which is also $L^1$ and continuous), then
\begin{equation}
\sum_{\l \in \, \zed^d} f(\l) = \sum_{\m \in \, \zed^d } \hat{f}(\m),
\end{equation}
where $\hat{f}: \real^d \to \C$ is defined as
\[
\hat{f}(y) = \int_{\real^d} e^{2 \pi i \langle x, \, y \rangle} f(x)dx.
\]
Using this technique will introduce sums of Fourier-Laplace transforms defined over polyhedra and the complex variable will ensure convergence of such sums.  We note that while defined similarly, the Fourier-Laplace transform is defined for the complex variable $s \in \C^d$, while the Fourier transform is only defined on $\real^d$.

We wish to point out that $\alpha_{\P}(s)$ is a finite sum for any polytope $\P \subset \real^d$ and for all $s \in \C^d$ because the $ \omega_\P (m)=0$ for all $m \notin \P$.  Therefore convergence is not an issue when dealing with polytopes.  However, when we consider the solid-angle generating function for a pointed cone $\K$, convergence does become a slight issue.  

To discuss the convergence of $\alpha_{\K}(s)$, we need to define $K^*$, the polar cone associated with $K$. The polar cone $K^*$ is defined by
\[
K^* = \{x \in \real^d \, : \, \langle \, x, \, y \rangle < 0, \forall \, y \in \K \}.
\]
Thus, $\alpha_{\K}(s)$ converges if $s \in \C^d$ such that $-$Im$(s) \in \K^*$, because
\begin{eqnarray*}
&& -\textrm{Im}(s) \in \K^*
\\&\Rightarrow&
\langle -\textrm{Im}(\, s), \, m\rangle < 0, \ \ \forall \ m \in  \K \cap \zed^d
\\&\Leftrightarrow&
\big|e^{2 \pi \langle -\textrm{Im}(\, s), \, m\rangle} \big| < 1, \ \ \forall \ m \in  \K \cap \zed^d
\\&\Leftrightarrow&
\big|e^{2 \pi i \langle s, \, m\rangle} \big| < 1, \ \ \forall \ m \in  \K \cap \zed^d.
\end{eqnarray*}

We now further extend our definition of $\alpha_\P$, by replacing $\omega_\P (m)$ by the generalized $l^p$-solid angle measure   $\omega_{p, \P}(m)$                  
defined by (\ref{analyticomega}) and which we restate here:

\begin{equation}\label{restateanalyticomega}
\omega_{p,\P} (\x): = \lim_{\epsilon \to 0} \frac{1}{\epsilon^{d/p}} \int_{\P} e^{\frac{-\,c}{\ep} \| t - \x \|_p^p} dt.
\end{equation}
Thus, we will always assume that our solid angles $\omega_\P (m)$ are 
in fact the generalized $l^p$-solid angles  $\omega_{p, \P}(m)$, with a fixed
real $p \geq 1$.

Recalling Fact \ref{fact1}, we can write 
$
 \omega_\P (m) = \lim_{\epsilon \to 0} \left(1_{\P} \ast \phi_{\epsilon} \right)(m),
$
with the specific choice of
 $ \phi_{\ep} (t) := \frac{1}{\epsilon^{d/p}} \, e^{\frac{-\,c}{\ep} \| t \|_p^p}$.
We see that the usual definition of a solid angle is retrieved by setting $p=2$.
In general, for any $p$, our $\phi_\epsilon$ enjoys the property that its Fourier-Laplace
transform is rapidly decreasing.   The fact that  $\hat{\phi_\ep}$ decreases rapidly 
at infinity assures us the  absolute convergence of all  the series that ensue.

\section{A functional equation for the generalized solid-angle function $\alpha_K(s)$ of a real cone $K$}

We now show that the solid-angle generating function $\alpha_{\K}(s)$ obeys the following functional equation, also known as a reciprocity relation:
%
%
\begin{thm}
Suppose $\K$ is a real, simple $d$-cone in $\real^d$ with vertex at the origin and $s \in \C^d$.  Then
\begin{equation}
\alpha_{\K}(-s)= (-1)^d \alpha_{\K}(s).
\end{equation}
\end{thm}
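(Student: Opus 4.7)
My plan is to transform $\alpha_\K(s)$ into its Poisson-dual form and then exploit the antipodal symmetry of the Fourier-Laplace transform of $1_\K$ over a simple cone. By Fact~\ref{fact1} I first rewrite
\[
\alpha_\K(s) = \lim_{\ep \to 0} \sum_{m \in \zed^d} (1_\K \ast \phi_\ep)(m)\, e^{2\pi i \langle s, m\rangle},
\]
where the sum can be extended to all of $\zed^d$ because $(1_\K \ast \phi_\ep)(m) \to 0$ for $m \notin \K$. I then apply Poisson summation to the smooth function $x \mapsto (1_\K \ast \phi_\ep)(x)\,e^{2\pi i \langle s, x\rangle}$: using that the Fourier transform sends convolution to multiplication, the $\l$-th Fourier coefficient is $\hat{1_\K}(s+\l)\,\hat{\phi_\ep}(s+\l)$, so
\[
\alpha_\K(s) = \lim_{\ep \to 0} \sum_{\l \in \zed^d} \hat{1_\K}(s+\l)\,\hat{\phi_\ep}(s+\l),
\]
and the rapid decay of $\hat{\phi_\ep}$ emphasized in the excerpt gives absolute convergence of the dual sum for each fixed $\ep > 0$.

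The second ingredient is an explicit meromorphic formula for $\hat{1_\K}$. Since $\K$ is simple, I pick linearly independent generators $v_1, \ldots, v_d$, change variables $x = V t$ with $t \in \real_{\geq 0}^d$, and integrate each coordinate to get
\[
\hat{1_\K}(y) = \frac{(-1)^d |\det V|}{(2\pi i)^d \prod_{j=1}^d \langle v_j, y\rangle},
\]
valid as an integral when $\textrm{Im}\langle v_j, y\rangle > 0$ for each $j$ and as its meromorphic continuation throughout $\C^d$. Replacing $y$ by $-y$ multiplies each of the $d$ linear factors in the denominator by $-1$, which yields the key symmetry $\hat{1_\K}(-y) = (-1)^d \hat{1_\K}(y)$. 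Evenness of $\phi_\ep$ in turn forces $\hat{\phi_\ep}(-y) = \hat{\phi_\ep}(y)$.

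With both symmetries in hand the identity follows by re-indexing: starting from the Poisson-dual expression for $\alpha_\K(-s)$ and substituting $\l \mapsto -\l$ inside the lattice sum,
\[
\alpha_\K(-s) = \lim_{\ep \to 0} \sum_{\l \in \zed^d} \hat{1_\K}(-(s+\l))\,\hat{\phi_\ep}(-(s+\l)) = (-1)^d \lim_{\ep \to 0} \sum_{\l \in \zed^d} \hat{1_\K}(s+\l)\,\hat{\phi_\ep}(s+\l) = (-1)^d \alpha_\K(s).
\]

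The step I expect to demand the most care is legitimizing the analytic interchanges: verifying the hypotheses of Poisson summation for this smooth-but-non-Schwartz function, and passing $\lim_{\ep \to 0}$ through the lattice sum. The complex parameter $s$ supplies exponential decay of $(1_\K \ast \phi_\ep)(x)\,e^{2\pi i \langle s, x\rangle}$ in the region $-\textrm{Im}(s) \in \K^*$, while the rapid decay of $\hat{\phi_\ep}$ dominates the dual sum uniformly in $\ep$; once the identity is established on this domain of convergence, meromorphic continuation extends it to all $s \in \C^d$ as stated.
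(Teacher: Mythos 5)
Your proposal is correct and follows essentially the same route as the paper's proof: Poisson summation applied to $1_\K \ast \phi_\ep$, the convolution theorem, the explicit meromorphic formula for $\hat{1}_\K$ over a simple cone (your $\frac{(-1)^d|\det V|}{(2\pi i)^d\prod_j\langle v_j,y\rangle}$ agrees with the paper's $\frac{(-2\pi i)^{-d}|\det\K|}{\prod_j\langle \w_j,y\rangle}$), the antipodal symmetry it implies, the evenness of $\hat{\phi_\ep}$, and the re-indexing $\l \mapsto -\l$ of the lattice sum. Your additional attention to the domain of convergence and the interchange of limits is a welcome elaboration of points the paper treats more briefly.
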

\begin{proof}
For $j= 1, \dots, d$, let $\w_j$ be a generator of the simple cone $\K$. By
abuse of notation, we denote the determinant of the matrix whose
$j^{\textrm{th}}$ column is the edge vector $\w_j$ by $\det \K$. Then
\begin{eqnarray}
\alpha_{\K}(-s)&=& \lim_{\epsilon \to 0} \sum_{m \in \zed^d} \left(1_{\K} \ast
\phi_{\epsilon} \right)(m) e^{2 \pi i \langle -s, m \rangle}\\
&=& \lim_{\epsilon \to 0} \sum_{m \in \zed^d} \widehat{\left(1_{\K} \ast
\phi_{\epsilon} \right)}(m - s) \\ &=& \lim_{\epsilon \to 0} \sum_{m \in
\zed^d} \hat{1}_{\K}(m - s) \hat{\phi_{\epsilon}}(m - s) \\ &=& \lim_{\ep \to
0} \sum_{m \in \zed^d} \frac{ (-2 \pi i )^{- \, d} |\det{\K}|}{\prod_{j=1}^{d}
\langle
\w_j, m-s \rangle} \  \hat{\phi_{\epsilon}}(m - s)
\end{eqnarray}
The last equality uses the formula for $\hat{1}_\K$, which is exercise 10.4 in
\cite{Sinai}. We used Poisson summation in the second equality, which is valid
because the convolution of $1_\K$ with $\phi_\epsilon$ is an integrable and
continuous function whenever $\phi_\epsilon$ is integrable and continuous.

Now we will use the fact that the lattice sum is invariant under the
substitution $m = - n$.  Thus, we have
\begin{eqnarray}
\alpha_{\K}(-s)&=& \lim_{\epsilon \to 0} \sum_{n \in \zed^d} \frac{(-2 \pi i )^{- \, d} |\det{\K}|}{\prod_{j=1}^{d} \langle \w_j, -n-s \rangle} \
\hat{\phi_{\epsilon}}(-n - s) \\ &=& (-1)^d \lim_{\epsilon \to 0} \sum_{n \in
\zed^d} \frac{(-2 \pi i )^{- \, d} |\det{\K}|}{\prod_{j=1}^{d} \langle \w_j, n+s \rangle} \ \hat{\phi_{\epsilon}}(n + s) \label{secondequation}\\
&=& (-1)^d \lim_{\epsilon \to 0} \sum_{n \in \zed^d}  \hat{1}_{\K}(n + s)
\hat{\phi_{\epsilon}}(n + s) \\ &=& (-1)^d \lim_{\epsilon \to 0} \sum_{n \in
\zed^d} \widehat{\left(1_{\K} \ast \phi_{\ep} \right)}(n + s) \\
&=& (-1)^d \lim_{\ep \to 0} \sum_{n \in \zed^d} \left(1_{\K} \ast
\phi_{\ep} \right)(n)e^{2 \pi i \langle s, n \rangle} \\ &=& (-1)^d
\alpha_{\K}(s).
\end{eqnarray}
In (\ref{secondequation}), we used the fact that, for all complex vectors $z \in \C^d, \hat{\phi_\epsilon} (-z) = \hat{\phi_\epsilon} (z)$.  This last remark holds because
\begin{eqnarray*}
\hat{\phi_\epsilon} (-z) &=& \int_{\real^d} e^{2 \pi i \langle -z, x \rangle}
\phi_\epsilon (x) dx \\ &=& \int_{\real^d} e^{2 \pi i \langle z, -x \rangle}
\phi_\epsilon (x) dx \\ &=& \int_{\real^d} e^{2 \pi i \langle z, u \rangle}
\phi_\epsilon (-u) du \\ &=& \int_{\real^d} e^{2 \pi i \langle z, u \rangle}
\phi_\epsilon (u) du \\ &=& \hat{\phi_\epsilon} (z).
\end{eqnarray*}

\end{proof}

\noindent We now generalize the previous theorem to any real $d$-cone.

\begin{thm}
Suppose $\K$ is a $d$-cone with its vertex at the origin, $\v \in \real^d$, and $s \in \C^d$. Then the solid-angle generating function $\alpha_{\v + \K} (s)$ of the $d$-cone $\v + \K$ satisfies
\begin{equation}
\alpha_{\v + \K}(-s)= (-1)^d \alpha_{-v + \K}(s).
\end{equation}
\end{thm}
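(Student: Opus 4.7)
The plan is to mimic the proof of the previous theorem, running Poisson summation on the lattice sum defining $\alpha_{\v + \K}(-s)$ and carefully tracking the phase contributed by the translation by $\v$. The key observation is that a real-space shift produces a Fourier-side modulation: since $1_{\v + \K}(t) = 1_{\K}(t - \v)$, we have $\hat{1}_{\v + \K}(z) = e^{2 \pi i \langle z, \v \rangle}\, \hat{1}_{\K}(z)$, and likewise $\hat{1}_{-\v + \K}(z) = e^{-2 \pi i \langle z, \v \rangle}\, \hat{1}_{\K}(z)$. This reduces the entire question to $\hat{1}_{\K}$, to which the machinery of the previous theorem applies.

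Starting from $\alpha_{\v + \K}(-s) = \lim_{\ep \to 0} \sum_{m \in \zed^d} (1_{\v + \K} \ast \phi_\ep)(m)\, e^{-2 \pi i \langle s, m \rangle}$, the same Poisson summation step as in the previous theorem yields
\[
\alpha_{\v + \K}(-s) = \lim_{\ep \to 0} \sum_{m \in \zed^d} e^{2 \pi i \langle m - s, \v \rangle}\, \hat{1}_{\K}(m - s)\, \hat{\phi_\ep}(m - s).
\]
I then substitute $m = -n$ and invoke the two sign-change identities $\hat{\phi_\ep}(-z) = \hat{\phi_\ep}(z)$ (proved in the previous theorem) and $\hat{1}_{\K}(-z) = (-1)^d\, \hat{1}_{\K}(z)$. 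For a simple cone the latter is immediate from the product formula for $\hat{1}_{\K}$; for the general $d$-cone assumed here, it follows by triangulating $\K$ into simple $d$-cones with common apex at the origin, applying the simple case to each piece, and summing (the pairwise overlaps are measure zero, so $1_{\K} = \sum_i 1_{\K_i}$ a.e.\ and the identity persists after taking Fourier transforms).

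After the substitution, the exponential becomes $e^{-2 \pi i \langle n + s, \v \rangle}$, which is exactly the prefactor needed to turn $\hat{1}_{\K}(n+s)$ into $\hat{1}_{-\v + \K}(n + s)$. Factoring it back in and running Poisson summation in reverse delivers $(-1)^d \alpha_{-\v + \K}(s)$, as desired. The main obstacle — essentially the only new subtlety beyond the previous proof — is the bookkeeping of this phase: one must verify that the shift by $+\v$ inside $\hat{1}_{\v + \K}$ morphs precisely into a shift by $-\v$ after the reflection $m \mapsto -n$, so that $-\v + \K$ (rather than $\v + \K$) appears on the right-hand side. A minor secondary point is justifying the triangulation step so that each resulting piece is simple and the previous theorem's identities apply term-by-term.
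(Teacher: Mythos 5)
Your proposal is correct and follows essentially the same route as the paper: Poisson summation, the modulation identity $\hat{1}_{\v+\K}(z)=e^{2\pi i\langle z,\v\rangle}\hat{1}_{\K}(z)$, the reflection $m\mapsto -n$ together with $\hat{\phi_\ep}(-z)=\hat{\phi_\ep}(z)$, and the factor $(-1)^d$ coming from the product formula for a simple cone. The only cosmetic difference is that you reduce to simple cones by triangulating the indicator function before taking transforms, whereas the paper invokes additivity of solid angles at the outset; these are the same reduction in substance.
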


\begin{proof}
Since solid angles are additive, it suffices to prove this theorem for simple
cones.  Therefore, let $\w_j$ for $j= 1, \dots, d$ be the generators of the
simple cone $\K$. Then the cone $\v + \K$ has generators $\v + \w_j$ and we have

\begin{eqnarray}
\nonumber \alpha_{\v + \K}(-s)&=& \lim_{\epsilon \to 0} \sum_{m \in \zed^d} \left(1_{\v + \K}
\ast \phi_{\epsilon} \right)(m) e^{2 \pi i \langle -s, m \rangle}\\
&=& \lim_{\epsilon \to 0} \sum_{m \in \zed^d} \widehat{\left(1_{\v + \K} \ast
\phi_{\epsilon} \right)}(m - s) \label{G} \\ \nonumber &=& \lim_{\epsilon \to 0} \sum_{m \in \zed^d} \hat{1}_{\v + \K}(m - s) \hat{\phi_{\epsilon}}(m - s) .
\end{eqnarray}
We used Poisson summation in the (\ref{G}) above and we note that the formula for the Fourier-Laplace transform of the shifted cone $\v+\K$ is obtained from that of $\K$, since $\hat{1}_{\v+\K} = \hat{1}_{\K} \cdot e^{2 \pi i \langle \v, \, \cdot \, \rangle}$. Thus

\begin{eqnarray*}
\alpha_{\v + \K}(-s)&=& \lim_{\epsilon \to 0} \sum_{m \in \zed^d} \frac{(-2 \pi i)^{- \, d} |\det{\K}| \ e^{2 \pi i \langle \v,\; m - s
\rangle}}{\prod_{j=1}^{d} \langle \w_j, m-s \rangle} \ \hat{\phi_{\epsilon}}(m -s) \\ &=& \lim_{\epsilon \to 0} \sum_{n \in \zed^d} \frac{(-2 \pi i )^{- \, d}|\det{\K}| \ e^{2 \pi i \langle \v,\; - n - s \rangle}}{\prod_{j=1}^{d} \langle \w_j, -n-s \rangle} \ \hat{\phi_{\epsilon}}(-n - s) \\ &=& \!\! (-1)^d \lim_{\epsilon \to 0} \!\! \sum_{n \in \zed^d} \! \frac{(-2 \pi i )^{- \, d} |\det{\K}| \ e^{2 \pi i \langle - \v,\; n + s \rangle}}{\prod_{j=1}^{d} \langle \w_j, n+s \rangle} \ \hat{\phi_{\epsilon}}(n + s) \\ &=& (-1)^d \lim_{\epsilon \to 0} \sum_{n \in \zed^d}  \hat{1}_{-\v + \K}(n + s) \hat{\phi_{\epsilon}}(n + s) \\ &=& (-1)^d \lim_{\epsilon \to 0} \sum_{n \in \zed^d} \widehat{(1_{-\v+\K} \ast \phi_{\epsilon} )}(n + s) \\ &=& (-1)^d \lim_{\epsilon \to 0} \sum_{n \in
\zed^d} \left(1_{-\v+\K} \ast \phi_{\epsilon} \right)(n)e^{2 \pi i \langle s, n \rangle} \\ &=& (-1)^d \alpha_{-\v+ \K}(s).
\end{eqnarray*}
We again used the fact that the lattice sum is invariant under the substitution $m = - n$ and that $\hat{\phi_\epsilon} (-z) = \hat{\phi_\epsilon} (z)$, for all $z \in \C^d$.
\end{proof}

\section{A Brion-type theorem for  solid angles sums over real polytopes}

Here we state and prove the real analogue of Brion's theorem for rational polytopes, in terms of generalized solid angles.   We note that the finite sum 
\[
\alpha_{\P} (s) = \sum_{m \in \, \P \cap \, \zed^d} \omega_\P (m) e^{2 \pi i \langle s, \, m\rangle}
\] 
can be construed as
a discrete volume measure of $\P$, since   $\alpha_{\P} (s) $ assigns to each integer point in the interior of $\P$ a weight of $1$ and to each boundary integer point of $\P$  a weight between $0$ and $1$.    
The following theorem transfers the computation of a finite sum over a polytope to a finite collection of the infinite vertex tangent cone sums  $\alpha_{\K_\v}(s)$.

\begin{thm}
Suppose $\P$ is any real, convex d-polytope.  Then we have the following identity of
meromorphic functions for $s \in \C^d$:
\begin{equation}
\alpha_{\P} (s) = \sum_{\substack{\v \; \textrm{a vertex}\\\textrm{ of} \; \P}}
\alpha_{\K_{\v}}(s),
\end{equation}
where $\K_{\v}:= \{\v + \lambda(\y - \v): \, \y \in \P, \lambda \in \real_{\geq 0} \}$ is the vertex 
tangent cone of $\P$ at the vertex $\v$.
\end{thm}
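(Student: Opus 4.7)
The plan is to reuse the Poisson-summation strategy from the proofs of the two preceding theorems, but with one additional ingredient inserted in the middle: a classical Brion-type decomposition of the Fourier--Laplace transform of $1_{\P}$.

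First I invoke Fact~\ref{fact1} to rewrite
\[
\alpha_{\P}(s) \;=\; \lim_{\ep \to 0}\,\sum_{m \in \zed^d} (1_{\P} \ast \phi_{\ep})(m)\, e^{2\pi i \langle s, m\rangle},
\]
and then apply Poisson summation to the lattice sum. Since $\P$ is bounded, $1_{\P} \ast \phi_{\ep}$ is continuous and integrable; since $\hat{\phi_{\ep}}$ is rapidly decreasing, the transformed series
\[
\alpha_{\P}(s) \;=\; \lim_{\ep \to 0}\,\sum_{n \in \zed^d} \hat{1}_{\P}(n+s)\, \hat{\phi_{\ep}}(n+s)
\]
converges absolutely, exactly as in the two preceding proofs.

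The central new ingredient is the meromorphic identity
\[
\hat{1}_{\P}(\xi) \;=\; \sum_{\v \textrm{ vertex of } \P} \hat{1}_{\K_{\v}}(\xi),
\]
namely the Brion decomposition for the Fourier--Laplace transform of the indicator function of a real polytope; after triangulating each vertex tangent cone into simple pieces, each $\hat{1}_{\K_{\v}}$ has the explicit rational expression already used in the previous section. Substituting this identity on the frequency side and interchanging the finite sum over vertices with the absolutely convergent lattice sum gives
\[
\alpha_{\P}(s) \;=\; \sum_{\v}\;\lim_{\ep \to 0}\,\sum_{n \in \zed^d} \hat{1}_{\K_{\v}}(n+s)\, \hat{\phi_{\ep}}(n+s).
\]
Running Poisson summation in the reverse direction on each inner sum, using $\hat{1}_{\K_{\v}} \cdot \hat{\phi_{\ep}} = \widehat{1_{\K_{\v}} \ast \phi_{\ep}}$ together with Fact~\ref{fact1}, identifies each inner limit with $\alpha_{\K_{\v}}(s)$ and delivers the claimed identity.

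The main obstacle will be that $1_{\K_{\v}}$ is not integrable, so neither the Brion identity for $\hat{1}_{\P}$ nor the reverse Poisson step is directly justified as an identity of ordinary $L^1$ Fourier transforms; every step must be interpreted as an identity of meromorphic functions. I would handle this by first restricting to an open region in $\C^d$ on which all the cone generating functions converge simultaneously, where the rapid decay of $\hat{\phi_{\ep}}$ dominates the polynomial growth of the rational functions $\hat{1}_{\K_{\v}}(n+s)$ along the lattice and hence legitimizes each of the manipulations above. Meromorphic continuation then extends the resulting identity to all $s \in \C^d$, as the statement requires.
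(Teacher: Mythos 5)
Your proof is essentially correct, but it takes a genuinely different route from the paper. The paper never passes to the frequency side at all for this theorem: it starts from the Brianchon--Gram identity $1_{\P} = \sum_{\F \subseteq \P} (-1)^{\dim \F} 1_{\K_{\F}}$ over \emph{all} nonempty faces, convolves with $\phi_{\ep}$, sums over the lattice, and then kills every term with $\dim \F > 0$ by decomposing $\K_{\F}$ into relatively open faces $\G^{\circ}$, using that $\omega_{\K_{\F}}$ is constant on each $\G^{\circ}$ and that $\sum_{m \in \zed^d \cap \G^{\circ}} z^m = 0$ whenever $\G^{\circ}$ contains a line (Barvinok--Pommersheim). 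You instead apply Poisson summation, insert Barvinok's Brion-type identity $\hat{1}_{\P} = \sum_{\v} \hat{1}_{\K_{\v}}$ for real polytopes on the frequency side, and run Poisson summation backwards term by term --- which is exactly the mechanism the paper uses later in its proof of generalized Macdonald reciprocity, so all of your individual steps have precedents in the paper. What each approach buys: yours imports Brion's theorem as a black box and in exchange avoids the face-cancellation lemma entirely; the paper's argument is self-contained modulo Brianchon--Gram and the vanishing of lattice sums over sets containing lines, and in effect \emph{derives} a solid-angle Brion theorem rather than assuming the indicator-function version.

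One wrinkle in your final paragraph deserves a flag: there is no open region of $\C^d$ on which all the physical-space cone sums $\sum_{m} \omega_{\K_{\v}}(m) e^{2\pi i \langle s, m\rangle}$ converge simultaneously, since the relevant polar cones $\K_{\v}^{*}$ of the vertex tangent cones have pairwise disjoint interiors. The frequency-side sums $\sum_{n} \hat{1}_{\K_{\v}}(n+s)\hat{\phi_{\ep}}(n+s)$ do converge for all $s$ off the polar hyperplane arrangement, but the reverse Poisson step identifying each one with $\alpha_{\K_{\v}}(s)$ must be carried out vertex by vertex, each on its own region $-\mathrm{Im}(s) \in \mathrm{int}(\K_{\v}^{*})$, with the final identity assembled only at the level of meromorphic continuations. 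This is repairable and is no worse than the convergence bookkeeping the paper itself elides, but as written your ``simultaneous convergence'' region does not exist.
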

\begin{proof}
We begin with the Brianchon-Gram identity \cite{Sinai}:
\begin{equation}
1_{\P} (\x) = \sum_{\F \subseteq \P} (-1)^{\textrm{dim} \, \F} 1_{\K_{\F}}(\x),
\end{equation}
where the sum is taken over all nonempty faces $\F$ of $\P$ and $\K_{\F}$ is the tangent cone attached to $\F$ defined by $\K_{\F} := \{\mathbf{x} + \lambda ( \mathbf{y} - \mathbf{x}) : \mathbf{x} \in \F, \mathbf{y} \in \P, \lambda \in \real_{\geq 0}\}$. Next, we take the convolution of both sides with $\phi_{\epsilon}$, then multiply by $z^m$, and
finally, sum over all $m \in \zed^d$ to obtain

\begin{equation}\label{brionsum}
\sum_{m \in \zed^d}(1_{\P} \ast \phi_{\ep}) (m) z^m = \sum_{m \in \zed^d}
\sum_{\F \subseteq \P} (-1)^{\textrm{dim} \, \F} (1_{\K_{\F}} \ast \phi_{\ep})
(m) z^m.
\end{equation}
We wish to take the limit as $\ep \to 0$ of both sides of equation (\ref{brionsum}), but we first note that the infinite lattice sums are absolutely convergent due to the presence of the damping function $\phi_\ep$ and hence we can take the limit inside the sum.  Thus, we obtain
\begin{eqnarray*}
 \sum_{m \in \zed^d} \omega_{\P}(m) z^m &=& \sum_{m \in \zed^d} \sum_{\F
\subseteq \P} (-1)^{\textrm{dim} \, \F} \omega_{\K_\F} (m) z^m \\
&=& \!\!\!\! \sum_{\substack{\v \; \textrm{a vertex}\\\textrm{ of} \; \P}}
\sum_{m \in \zed^d} \omega_{\K_\v} (m) z^m + \!\! \sum_{\substack{\F \subseteq
\P \\ \\ \textrm{dim} \, \F > \, 0}} (-1)^{\textrm{dim} \, \F} \sum_{m \in
\zed^d} \omega_{\K_\F} (m) z^m.
\end{eqnarray*}
With the substitution $z^m = e^{2 \pi i \langle s , \, m \rangle}$, we have
shown that
\begin{equation}
\alpha_{\P} (s) = \sum_{\substack{\v \; \textrm{a vertex}\\\textrm{ of} \; \P}}
\alpha_{\K_{\v}}(s) + \sum_{\substack{\F \subseteq \P \\ \\ \textrm{dim} \, \F > \,
0}} (-1)^{\textrm{dim} \, \F} \alpha_{\K_{\F}}(s).
\end{equation}
Therefore, it remains to show that $\alpha_{\K_{\F}}(s) = 0$ for every face $\F$ of
$\P$ with $\textrm{dim} \, \F > \, 0$.  To this end, consider such a
$\alpha_{\K_{\F}}(s)$. Since $\K_{\F}$ is also a cone, we can write $\K_{\F}$ as the
disjoint union of its relative open faces $\G^\circ$ and obtain

\begin{equation}
\alpha_{\K_{\F}}(s) = \sum_{m \in \zed^d} \omega_{\K_\F} (m)z^m =\sum_{\G \subseteq
\, \F}\sum_{m \in \zed^{d} \cap \G^\circ} \omega_{\K_\F} (m)z^m.
\end{equation}
Since $\omega_{\K_\F} (m)$ is constant on the relative interior of each face
$\G$ of $\F$, we denote $\omega_{\K_\F} (m)$ by $\omega_{\G}$ when $m \in
\G^\circ$. Then we have
\begin{equation}\label{F}
\alpha_{\K_{\F}}(s) =\sum_{\G \subseteq \, \F} \omega_{\G}  \!\! \sum_{m \in
\zed^{d} \cap \G^\circ} \!\!\! z^m.
\end{equation}
Recall that $\textrm{dim} \, \F > \, 0$, and so $\textrm{dim} \, \G > \, 0$ for
every face $\G$ of $\F$.  Therefore, $\G^\circ$ contains a line and by theorem
3.1 in \cite{Barvinok4}
\begin{equation}
\sum_{m \in \zed^{d} \cap \G^\circ} \!\!\! z^m =0.
\end{equation}
Thus, by equation (\ref{F}), $\alpha_{\K_{\F}}(s) \! = 0$ for every face $\F$ of $\P$
with $\textrm{dim} \, \F >  0$.
\end{proof}

\section{Solid Angle Reciprocity for Real Polytopes}
We now introduce a measure of discrete volume:
\begin{equation}
A_{\P}(t) := \sum_{m \in \zed^{d}} \omega_{t\P}(m),
\end{equation}
where $\omega_{t\P} (m)$ is the generalized solid angle measure at $m\in \zed^d
\cap t \P$ defined in (\ref{restateanalyticomega}). Our next theorem is a generalization of the solid angle analogue of
Macdonald's reciprocity, which states that
\begin{equation}
A_\P (t) = (-1)^{\textrm{dim} \, \P} A_\P (-t),
\end{equation}
for $t \in \zed$ and for rational convex polytopes  $\P$ \cite{Sinai}.  First, we define a generalized
function for $s \in \C^d$ by
\begin{equation}
A_{\P}(t, s) := \sum_{m \in \zed^{d}} \omega_{t\P}(m) e^{2 \pi i \langle m, \,
s \rangle}.
\end{equation}

I. G. Macdonald introduced the notation $A_\P(t)$ to denote the \emph{solid angle} measure of a polytope.   We can relate his notation to our solid angle sum $\alpha_{t\P}$ by noting that
 $A_{\P}(t, s) = \alpha_{t\P}(s)$.    For the remainder of this paper, we use Macdonald's notation to emphasize the independent variable $t$, which we extend from $t \in \zed$ to any  
 $t \in \Bbb R$.

We will show that $A_{\P}(t, s)$ is a real analytic function of $t$ which satisfies
the reciprocity relation $A_\P (-t, s) = (-1)^{\textrm{dim} \P} A_\P (t, - s)$.
Furthermore, the following proof extends Macdonald's reciprocity to all \emph{real}
convex polytopes via $A_\P (t) = \lim_{s \to 0} A_\P (t, s)$, and to all real dilations $t$.

\begin{thm}[\textbf{Generalized Macdonald's Reciprocity}]\label{Macdonald}
Suppose $\P$ is a real convex $d$-polytope in $\real^d$.  Then

\medskip

\noindent \emph{(1)}  For $t \in \real$ and $s \in \C^d$, $A_{\P} (t, s)$ satisfies
\begin{equation}
A_\P (-t, s) = (-1)^{d} A_\P (t, -s).
\end{equation}

\noindent \emph{(2)}  Furthermore, if $\P$ is a simple $d$-polytope, $t \in \real$ and $s \in \C^d$, then the continuation
of $A_{\P}(t, s)$ to a real analytic function of $t$ is given by
\begin{equation}\label{A_P}
A_{\P} (t, s) = \lim_{\ep \to 0}\sum_{\substack{\v \; \emph{a vertex}\\\emph{
of} \; \P}} \frac{|\det{\K(\v)}|}{(-2 \pi i )^{d}} \sum_{m \in \zed^d}
\frac{\exp(2 \pi i  \,   t \langle \v,\; m + s \rangle)\ \hat{\phi_{\epsilon}}(m +
s)}{\prod_{j=1}^{d} \langle \w_j(\v), m+s \rangle }.
\end{equation}
\end{thm}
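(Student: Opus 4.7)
The plan is to establish part (2) first, since its explicit formula drives the reciprocity in part (1). For a simple polytope $\P$ with vertices $\v$, write $\K(\v)$ for the simple $d$-cone of edge directions at $\v$ (anchored at the origin) with edge generators $\w_j(\v)$, so that the vertex tangent cone of $\P$ at $\v$ is $\v + \K(\v)$. For $t > 0$, the dilate $t\P$ has vertices $t\v$ and vertex tangent cones $t\v + \K(\v)$. Applying the Brion-type theorem of Section 4 to $t\P$ gives
\[
A_\P(t,s) \;=\; \alpha_{t\P}(s) \;=\; \sum_{\v} \alpha_{t\v + \K(\v)}(s).
\]
I would then rerun the Poisson-summation computation from the second theorem of Section 3 on each translated simple cone, with the only modification being that the shift is $t\v$ rather than $\v$; this produces the exponential factor $\exp(2\pi i\, t\langle \v, m+s\rangle)$ and yields formula (\ref{A_P}). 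Since $t$ appears only inside the entire factor $\exp(2\pi i\, t\langle \v, m+s\rangle)$, while the $m$-sum converges absolutely and uniformly on compact subsets of $\real$ (in $t$) thanks to the rapid decay of $\hat{\phi}_{\ep}$, the right-hand side is real analytic (indeed entire) in $t$. As it agrees with the raw definition of $A_\P(t,s)$ for positive real $t$, this formula furnishes the advertised real-analytic continuation to all real $t$.

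For part (1), I would transform formula (\ref{A_P}) under $t \mapsto -t$ via the substitution $m \mapsto -n$ (which is a bijection of $\zed^d$ and so leaves the lattice sum invariant). Three things then happen inside the reindexed summand: the product $\prod_{j} \langle \w_j(\v), -n+s\rangle$ contributes an overall $(-1)^d$; the exponent becomes $\exp(2\pi i\, t\langle \v, n - s\rangle)$; and $\hat{\phi}_{\ep}(-n+s) = \hat{\phi}_{\ep}(n-s)$ by the evenness of $\hat{\phi}_{\ep}$ already used in Section 3. Collecting these, what remains is exactly $(-1)^d$ times formula (\ref{A_P}) with $s$ replaced by $-s$, which by definition is $(-1)^d A_\P(t,-s)$. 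This proves part (1) whenever $\P$ is simple.

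To extend part (1) to a general real convex $d$-polytope, I would again invoke the Brion-type theorem of Section 4 (which does not require simplicity), writing $A_\P(t,s) = \sum_{\v} \alpha_{\K_{t\v}}(s)$, and then triangulate each vertex tangent cone $\K_{\v}$ into simple subcones sharing the apex $\v$. Using additivity of the solid-angle generating function along the triangulation — so that shared lower-dimensional boundary contributions cancel, in the same spirit as the vanishing argument $\alpha_{\K_{\F}}(s) = 0$ used in the Brion-type proof — each simple subcone obeys the reciprocity just established, and summing recovers the statement for $\P$. The chief obstacle is precisely this non-simple case: one must verify that the triangulation is compatible with the $s$-dependent formula so that the spurious shared faces introduced during the subdivision cancel correctly at the level of the generating function. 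The exchange of the limit $\ep \to 0$ with the lattice sum, again justified by the rapid decay of $\hat{\phi}_{\ep}$, presents no new difficulty beyond what appeared in Sections 3 and 4.
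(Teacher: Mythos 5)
Your proposal is correct and arrives at the same formula by the same basic machinery (Poisson summation with the $\hat{\phi_{\ep}}$ regularization, a Brion-type vertex-cone decomposition, and the $m \mapsto -n$ substitution together with the evenness of $\hat{\phi_{\ep}}$ for part (1)), but you order the two main steps differently from the paper. The paper first applies Poisson summation to the compactly supported function $1_{t\P} \ast \phi_{\ep}$, obtaining $\lim_{\ep \to 0}\sum_m \hat{1}_{t\P}(m+s)\hat{\phi_{\ep}}(m+s)$, and only then decomposes $\hat{1}_{t\P}$ into vertex-cone Fourier--Laplace transforms using Barvinok's theorem for \emph{real} polytopes; you instead decompose first, via the solid-angle Brion theorem of Section 4 applied to $t\P$, and then Poisson-sum each shifted simple cone as in Section 3. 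The paper's ordering is slightly cleaner analytically, since Poisson summation is applied to a single integrable function and the cone decomposition happens at the level of an identity of functions of $s$ that holds pointwise off the singular hyperplanes, whereas your route invokes the Section 4 identity of meromorphic continuations and then regularizes each cone sum separately; with the $\hat{\phi_{\ep}}$ damping both are legitimate. Finally, the paper dispenses with your "chief obstacle" (the non-simple case of part (1)) at the outset by triangulating $\P$ into simplices and using additivity of solid angles, so that $A_{\P}(t,s)$ is exactly the sum of the $A_{\Delta_i}(t,s)$ over the simplices $\Delta_i$ (shared boundaries contribute nothing extra because $\sum_i 1_{t\Delta_i} = 1_{t\P}$ almost everywhere and convolution with $\phi_{\ep}$ ignores measure-zero sets); your alternative of triangulating each vertex tangent cone into simple subcones works for the same reason, and the cancellation you worry about is handled by this additivity rather than by the $\alpha_{\K_{\F}}(s)=0$ vanishing argument.
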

\begin{proof}
Since solid angles are additive and we can assume a triangulation of a
polytope, it suffices to prove this theorem for a real simplex $\P$. We will
use the fact that
\begin{equation}
 \omega_{t\P} (m) =\lim_{\ep \to 0} \left(1_{t\P} \ast \phi_{\ep} \right)(m),
\end{equation}
for an appropriate choice of $\phi_{\epsilon}$ with $\phi_{\epsilon}(-x) =
\phi_{\epsilon}(x)$. Then we have

\begin{eqnarray}
A_{\P}(t, s) &:=& \sum_{m \in \zed^{d}} \omega_{t\P}(m) e^{2 \pi i \langle m,
\, s \rangle}\\&=& \lim_{\epsilon \to 0} \sum_{m \in \zed^d} \left(1_{t\P} \ast
\phi_{\epsilon} \right)(m)e^{2 \pi i \langle m, \, s \rangle} \\&=&
\lim_{\epsilon \to 0} \sum_{m \in \zed^d} \widehat{\left(1_{t\P} \ast
\phi_{\epsilon} \right)}(m+s)\label{thirdequation}
\\&=& \lim_{\epsilon \to 0} \sum_{m \in \zed^d} \hat{1}_{t\P}(m+s)
\hat{\phi_{\epsilon}}(m+s).
\end{eqnarray}

\noindent We used Poisson summation in the (\ref{thirdequation}).  Next, we use an extension of Brion's theorem for \textit{real} polytopes due to Barvinok
\cite{Barvinok1} to obtain
\begin{eqnarray}
A_{\P}(t, s)&=& \lim_{\epsilon \to 0} \sum_{m \in \zed^d}
\left(\sum_{\substack{\v \; \textrm{a vertex}\\\textrm{ of} \; \P}}
\hat{1}_{t\v + \K(\v)}(m+s) \right) \hat{\phi_{\epsilon}}(m+s).
\end{eqnarray}
Barvinok's   theorem \cite{Barvinok1}  allows us to write $\hat{1}_{t\P}$ as the sum of Fourier-Laplace transforms over the tangent cones at the vertices of $t\P$.  Therefore, if $\v + \K(\v)$ is the tangent cone at the vertex $\v$ of $\P$, where $\K(\v)$ is a simple cone with apex at the origin, then $t(\v+\K(\v)) = t\v + \K(\v)$ is the tangent cone at the vertex $t\v$ of $t\P$, since a cone whose apex is the origin does not change under dilation. Using the formula for the Fourier-Laplace transform of a simple cone
\begin{eqnarray}
\!\!\!\!\!\!\!\!\!\!\!\! A_{\P}(t, s) \!\!\!\! &=& \!\!\!\! \lim_{\epsilon \to 0} \!\! \sum_{m \in \zed^d} \!\! \left(\sum_{\substack{\v \; \textrm{a} \\ \textrm{vertex}\\\textrm{ of} \; \P}} \frac{ |\det{\K(\v)}| \ \exp(2 \pi i \langle \, t\v,\; m+s \rangle)}{(-2 \pi i)^{d}\prod_{j=1}^{d} \langle \w_{j}(\v), m+s \rangle} \! \right) \! \hat{\phi_{\epsilon}}(m+s)
\\ &=& \!\!\!\! \lim_{\epsilon \to 0} \sum_{\substack{\v \; \textrm{a} \\ \textrm{vertex}\\\textrm{ of} \; \P}}  \frac{ |\det{\K(\v)}|}{(-2 \pi i )^{d}}\sum_{m \in \zed^d} \frac{ \ \exp(2 \pi i t\langle \, \v,\; m+s \rangle)
\hat{\phi_{\epsilon}}(m+s)}{\prod_{j=1}^{d} \langle \w_{j}(\v), m+s \rangle}.
\end{eqnarray}
We note that the only place a $t$ appears in this last equation is in the
exponent of an exponential. Hence, $A_{\P}(t, s)$ is a real analytic function of $t$, because we can differentiate inside the summation sign due to the rapid  convergence provided by $\hat{\phi_\ep}$.  This proves part (2).

Now for the proof of part (1), we evaluate the continuation of $A_{\P}(t, s)$ at
$-t$ to obtain
\begin{eqnarray}
\!\!\!\!\!\!\!\!\!\!\!\!\!\!\!\!A_{\P}(-t, s) \!\!\!\! &=& \lim_{\epsilon \to 0} \sum_{\substack{\v \; \textrm{a vertex}\\
\textrm{ of} \; \P}}  \frac{ |\det{\K(\v)}|}{(-2 \pi i )^{d}}\sum_{m \in
\zed^d} \frac{ \ e^{2 \pi i(-t)\langle \, \v,\; m+s \rangle}
\hat{\phi_{\epsilon}}(m+s)}{\prod_{j=1}^{d} \langle \w_{j}(\v), m+s \rangle}\\
&=& \lim_{\epsilon \to 0} \sum_{\substack{\v \; \textrm{a vertex}\\
\textrm{ of} \; \P}}  \frac{ |\det{\K(\v)}|}{(-2 \pi i )^{d}}\sum_{n \in
\zed^d} \frac{ \ e^{2 \pi i t\langle \, \v,\; n-s \rangle}
\hat{\phi_{\epsilon}}(-n+s)}{\prod_{j=1}^{d} \langle \w_{j}(\v), -n+s \rangle}\\ &=&  (-1)^d \lim_{\epsilon \to 0} \sum_{\substack{\v \; \textrm{a vertex}\\ \textrm{ of} \; \P}}  \frac{ |\det{\K(\v)}|}{(-2 \pi i )^{d}}\sum_{n \in \zed^d} \frac{ \ e^{2 \pi i t\langle \, \v,\; n-s \rangle}
\hat{\phi_{\epsilon}}(n-s)}{\prod_{j=1}^{d} \langle \w_{j}(\v), n-s \rangle}\\
&=& (-1)^d A_{\P}(t, -s).
\end{eqnarray}
We again used the fact that the lattice sum is invariant under the substitution $m = - n$ and that $\hat{\phi_\epsilon} (-z) = \hat{\phi_\epsilon} (z)$, for all $z \in \C^d$.
\end{proof}

\smallskip
\begin{cor}
Suppose $\P$ is a real convex $d$-polytope in $\real^d$ with $d$ odd.  Then
\[
A_\P (0, 0) = 0.
\]
\end{cor}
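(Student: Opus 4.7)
The plan is immediate: specialize part (1) of Theorem \ref{Macdonald} to $t = 0$ and $s = 0$. The reciprocity relation $A_\P(-t, s) = (-1)^d A_\P(t, -s)$ becomes, at this point, $A_\P(0, 0) = (-1)^d A_\P(0, 0)$. Since $d$ is odd, $(-1)^d = -1$, and we obtain $2\, A_\P(0,0) = 0$, hence $A_\P(0,0) = 0$.

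The only thing that really needs to be checked is that the evaluation $A_\P(0, 0)$ makes sense on both sides of the reciprocity. The naive expression $\sum_{m \in \zed^d} \omega_{t\P}(m)$ degenerates at $t = 0$ because $t\P$ collapses to a point, so I would instead invoke the real analytic continuation guaranteed by part (2) of Theorem \ref{Macdonald}. The explicit vertex-tangent-cone formula \eqref{A_P} is manifestly well-defined at $t = 0$ (the factor $\exp(2\pi i\, t \langle \v,\; m+s\rangle)$ simply becomes $1$ there), and this is the value of $A_\P(0, 0)$ that the reciprocity compares to itself.

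For a general real convex polytope $\P$ (not necessarily simple), I would first triangulate $\P$ into simple subpolytopes and use additivity of the solid-angle sum across the triangulation, exactly as at the start of the proof of Theorem \ref{Macdonald}; the continuation formula then applies to each piece and the reciprocity is inherited termwise.

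There is no real obstacle: this is a one-line algebraic consequence of a theorem already established, together with the standard reduction to the simple case. The only care required is to remember that $A_\P(0, 0)$ refers to the analytic continuation in $t$, not to the degenerate direct sum.
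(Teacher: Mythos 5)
Your proof is correct and takes essentially the same route as the paper: the paper's entire argument is the one-line specialization $A_\P(0,0) = (-1)^d A_\P(0,0) = -A_\P(0,0)$ from part (1) of Theorem \ref{Macdonald}. Your additional remarks about interpreting $A_\P(0,0)$ via the analytic continuation and reducing to the simple case are sensible but go beyond what the paper records.
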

\begin{proof}
By Theorem \ref{Macdonald}, we have
\[
A_\P (0, 0) = (-1)^{d} A_\P (0, 0) = - A_\P (0, 0).
\]
\end{proof}

We pause for a moment to discuss the subtlety involved in computing $A_{\P}(t)$
using the previous theorem.  We know that $A_{\P}(t)$ is a real analytic function of
$t$ and in fact is a quasi-polynomial in $t \in \zed$ when $\P$ is a rational polytope \cite{Macdonald}.
The introduction of the complex parameter $s$ in $A_{\P}(t, s)$ prevents the
denominators of $\hat{1}_{\v+\K(\v)}$ from being zero.  So one might wonder if
$A_\P (t) = \lim_{s \to 0} A_\P (t, s)$ even exists.  It is Barvinok's extension of Brion's theorem
that tells us that when we add up $\hat{1}_{\v+\K(v)}(m+s)$ at every vertex $\v$, magically all of the singularities in $s \in \C^d$ cancel.

To compute $A_{\P}(t)$ from (\ref{A_P}), we write all of the rational functions on the right-hand side over one denominator and use L'H\^opital's rule to compute the limit as $s \to 0$. The following example will illustrate this procedure.

\begin{ex}
\noindent \emph{Let $\P$ be the triangle in $\real^2$ with vertices $\v_1 =(0,0),
\v_2 = (0,1)$ and $\v_3 = (\sqrt{3}, 0)$}.
\begin{center}
\begin{figure}[h]
\setlength{\unitlength}{3.6cm}
\begin{picture}(1,1)
\thicklines
\put(.3,0){\line(0,1){1}}
\put(.3,0){\line(1,0){2}}
\put(.3,1){\line(2,-1){2}}
\put(.3,-.1){$\v_1 = (0,0)$}
\put(.3,1.03){$\v_2 = (0,1)$}
\put(2.2,-.1){$\v_3 = (\sqrt{3},0)$}
\end{picture}
\\
\caption{The triangle $\P$.}
\end{figure}
\end{center}
\end{ex}
\noindent To calculate $A_{\P} (t)$, we use equation (\ref{A_P}) in Theorem \ref{Macdonald} and we begin by evaluating the determinant of the tangent cone at each vertex.  We have
 \[
 |\det{\K(\v_1)}| = \det\left(\begin{array}{ccc} 1 & 0 \\ 0 & 1 \end{array}\right)= 1,
 \]
 \[
 |\det{\K(\v_2)}| = \det\left( \! \begin{array}{ccc} 0 & \sqrt{3} \\ -1 & -1 \end{array} \!\right)= \sqrt{3},
 \]
 \[
 \textrm{and }|\det{\K(\v_3)}| = \det\left(\begin{array}{ccc} -\sqrt{3} & -1 \\ 1 & 0 \end{array}\right)= 1.
 \]
We also need to evaluate
\begin{equation}\label{expfraction}
\frac{e^{2 \pi i t \langle \v,\; m + s \rangle} } {\prod_{j=1}^{2} \langle \w_j(\v), m+s \rangle},
\end{equation}
for each of the vertex cones $\K_{\v_1}, \K_{\v_2}$, and $\K_{\v_3}$.
Then (\ref{expfraction}) equals
\[
\frac{1}{(m_1 + s_1)(m_2 +s_2)}, \ \ \ \frac{-e^{2 \pi i t (m_2 + s_2)}}{(m_2 +s_2)(\sqrt{3}(m_1+s_1) - m_2 -s_2)}, \ \ \textrm{and } \;
\]
\[
 \frac{e^{2 \pi i t \sqrt{3}(m_1+s_1)}}{(m_1 +s_1)(\sqrt{3}(m_1+s_1) - m_2 -s_2)}, \ \textrm{for $\v_1, \v_2,$ and $\v_3$ respectively.}
\]

\noindent Thus
\begin{eqnarray*}
A_{\P} (t, s) &=& \lim_{\ep \to 0}\sum_{\substack{\v \; \textrm{a
vertex}\\\textrm{ of} \; \P}} \frac{|\det{\K(\v)}|}{(-2 \pi i )^{2}} \sum_{m
\in \zed^2} \frac{e^{2 \pi i t \langle \v,\; m + s \rangle}\
\hat{\phi_{\epsilon}}(m + s)}{\prod_{j=1}^{2} \langle \w_j(\v), m+s \rangle }\\
&=& \lim_{\ep \to 0} \frac{1}{- 4 \pi^{2}} \sum_{(m_1 , m_2) \in \zed^2}
\hat{\phi_{\epsilon}}(m + s) \left( \frac{1}{(m_1 + s_1)(m_2 +s_2)} \right. \\
&-& \left. \!\!\! \frac{\sqrt{3} e^{2 \pi i t (m_2 +s_2)}}{(m_2
+s_2)(\sqrt{3}(m_1+s_1) - m_2 -s_2)} + \frac{e^{2 \pi i t \sqrt{3}(m_1
+s_1)}}{(m_1 +s_1)(\sqrt{3}(m_1+s_1) - m_2 -s_2)} \! \right) \\
&=& \lim_{\ep \to 0} \frac{1}{- 4 \pi^{2}} \sum_{(m_1 , m_2)  \in \zed^2}
\hat{\phi_{\epsilon}}(m + s) \cdot \frac{f(t, s)}{g(t, s)},
\end{eqnarray*}
where
\[
\frac{f(t, s)}{g(t, s)}= \frac{\sqrt{3}(m_1+s_1) - m_2 -s_2 -\sqrt{3}(m_1+s_1)
e^{2 \pi i t (m_2 +s_2)} +(m_2 +s_2)e^{2 \pi i t \sqrt{3}(m_1+s_1)}}{(m_1 +
s_1)(m_2 +s_2)(\sqrt{3}(m_1+s_1) - m_2 -s_2)}.
\]
All that remains is to use L'H\^opital's rule to calculate
\[
 \lim_{s \to 0} \frac{f(t, s)}{g(t, s)}.
\]
In order to take the derivative with respect to $s$, we first let $s = \sigma
(x_1, x_2)$ for some fixed $(x_1, x_2) \neq 0$ and then take the derivative
with respect to $\sigma$. Since $t$ appears in an exponential in the numerator,
each iteration of L'H\^opital's rule will produce a factor of $t$ in the
numerator. It is known that for a rational $d$-polytope, $A_\P (t)$ is a
quasi-polynomial in $t$ of degree $d$.  Therefore, in general, one must apply
L'H\^opital's rule $d$ times for a $d$-polytope. Thus

\begin{eqnarray*}
\!\!\!\!\!\!\!\!\!  \lim_{s \to 0} \frac{f(t, s)}{g(t, s)} \!\!\!&=&
\lim_{\sigma \to 0} \frac{f(t, \sigma)}{g(t, \sigma)} \\&=& \lim_{\sigma \to 0}
\frac{f'(t, \sigma)}{g'(t, \sigma)}  \\&=& \lim_{\sigma \to 0}
\frac{f''(t, \sigma)}{g''(t, \sigma)} \\&=& \frac{f''(t, 0)}{g''(t, 0)} \\
&=& \!\!\!\! \frac{-6 \pi^2 m_2 x_1^2 t^2 e^{2 \pi i t \sqrt{3}m_1}    + 2 \pi
i \sqrt{3} x_1 x_2 t ( e^{2 \pi i t \sqrt{3}m_1} - e^{2 \pi i t m_2}) + 2 \pi^2 \sqrt{3} m_1 x_2^2 t^2 e^{2 \pi i t m_2} }{-x_2(2m_2 x_1 + m_1 x_2) +
\sqrt{3}x_1 (m_2 x_1 + 2m_1 x_2)},
\end{eqnarray*}
where we used Mathematica in these last steps. We can now choose $(x_1, x_2)$ to be any non-zero vector as long as the denominator is never zero.  Therefore, we let $(x_1, x_2) = (1, 1)$ and we have
\begin{eqnarray*}
A_\P (t) &=& \lim_{s \to 0} A_\P (t, s)\\
&=&\lim_{\ep \to 0} \frac{1}{- 4 \pi^{2}} \sum_{(m_1 , m_2)  \in \zed^2}
\hat{\phi_{\epsilon}}(m) \cdot  \frac{f''(t, 0)}{g''(t, 0)} \\
&=&\lim_{\ep \to 0} \frac{1}{- 4 \pi^{2}} \sum_{(m_1 , m_2)  \in \zed^2}
\hat{\phi_{\epsilon}}(m) \cdot \\
&& \frac{-6 \pi^2 m_2 t^2 e^{2 \pi i t \sqrt{3}m_1} + 2 \pi i \sqrt{3} t ( e^{2
\pi i t \sqrt{3}m_1} - e^{2 \pi i t m_2}) + 2 \pi^2 \sqrt{3} m_1 t^2 e^{2 \pi i
t m_2} }{-2m_2 - m_1 + \sqrt{3}(m_2 + 2m_1)}.
\end{eqnarray*}
\\
When
\begin{equation}
\phi_{\ep}(s) = \ep^{-\frac{d}{2}} \exp \left( \frac{- \pi}{\ep} \langle s, \, s \rangle \right) = \ep^{-1} \exp \left( \frac{- \pi}{\ep} \left(s_1^2 + s_2^2 \right) \right),
\end{equation}
it follows that
\begin{equation}\label{phi_ep}
\hat{\phi_{\ep}}(m_1, m_2) = \ep^{-\frac{1}{2}} \exp \left( - \pi \ep \left( m_1^2 + m_2^2 \right) \right).
\end{equation}
Since $\hat{\phi_{\ep}}(m_1, m_2)$ provides absolute convergence, we can break up the series for $A_\P (t)$ and use equation (\ref{phi_ep}) to obtain the following:

\begin{eqnarray*}
A_\P (t) 
&=& t^2 \left(\lim_{\ep \to 0} \frac{\ep^{-\frac{1}{2}} }{- 4 \pi^{2}} \sum_{(m_1 , m_2)  \in \zed^2} \frac{ \left( 2 \pi^2 \sqrt{3} m_1 e^{2 \pi i t m_2}-6 \pi^2 m_2 e^{2 \pi i t \sqrt{3}m_1} \right) e^{- \pi \ep \left( m_1^2 + m_2^2 \right)}}{-2m_2 - m_1 + \sqrt{3}(m_2 + 2m_1)} \right) \\
&+& t \left(\lim_{\ep \to 0} \frac{\ep^{-\frac{1}{2}} }{- 4 \pi^{2}} \sum_{(m_1 , m_2)  \in \zed^2} \frac{2 \pi i \sqrt{3} ( e^{2
\pi i t \sqrt{3}m_1} - e^{2 \pi i t m_2}) e^{- \pi \ep \left( m_1^2 + m_2^2 \right)}}{-2m_2 - m_1 + \sqrt{3}(m_2 + 2m_1)} \right).  
\end{eqnarray*}
\begin{flushright}
\qedsymbol
\end{flushright}

In the previous example, we note that $A_\P (0) = 0$ and the dimension, $d = 2$, is even. This leads to the following conjecture:
\begin{conj}
Suppose $\P$ is a real convex $d$-polytope for any dimension $d$.  Then
\[
A_\P (0, 0) = 0.
\]
\end{conj}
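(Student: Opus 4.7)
The plan is to establish the stronger statement $A_\P(0, s) = 0$ for every generic $s \in \C^d$, and then recover $A_\P(0, 0) = 0$ by passing to the limit $s \to 0$. Starting from formula (\ref{A_P}) of Theorem \ref{Macdonald} and specializing to $t = 0$, every exponential $e^{2\pi i t \langle \v, m+s\rangle}$ collapses to $1$, so after exchanging the finite vertex sum with the absolutely convergent lattice sum one obtains
$$A_\P(0, s) = \lim_{\ep \to 0} \sum_{m \in \zed^d} \hat{\phi_\ep}(m+s)\, F(m+s), \qquad F(y) := \frac{1}{(-2\pi i)^d}\sum_{\v} \frac{|\det \K(\v)|}{\prod_{j=1}^d \langle \w_j(\v), y\rangle}.$$

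The crux is then to show the identity $F \equiv 0$, which I would extract from Barvinok's real extension of Brion's theorem applied to the dilate $t\P$. For $t \neq 0$ and $y$ in the complement of the finite hyperplane arrangement $\bigcup_{\v,j}\{\langle \w_j(\v), y\rangle = 0\}$ one has
$$t^d\, \hat{1}_{\P}(t y) \;=\; \hat{1}_{t\P}(y) \;=\; \sum_{\v} \frac{|\det \K(\v)|\, e^{2\pi i t \langle \v, y\rangle}}{(-2\pi i)^d \prod_{j=1}^d \langle \w_j(\v), y\rangle},$$
the first equality being the change of variables $x \mapsto t x$ in the defining integral and the second being Barvinok's theorem. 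Both sides are entire in $t$ for such $y$, so the identity theorem extends the equality to $t = 0$; the left-hand side becomes $0$, forcing $F(y) = 0$. Substituting $F(m+s) = 0$ back into the displayed formula for $A_\P(0,s)$ immediately yields $A_\P(0, s) = 0$ for every generic $s$.

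It remains to pass from generic $s$ to $s = 0$. As the paper emphasizes just before the triangle example, when the vertex contributions in the Barvinok formula are placed over a common denominator the apparent singularities at $s = 0$ cancel, so $A_\P(0, s)$ extends to a real analytic function of $s$ in a full neighbourhood of the origin; since this analytic function vanishes on the dense set of generic $s$, the identity theorem forces it to vanish identically, and in particular $A_\P(0, 0) = 0$. The main obstacle I anticipate is the rigorous justification of this final paragraph, namely that the L'H\^opital procedure used to define $A_\P(t, s)$ at $s = 0$ genuinely commutes with the $\ep \to 0$ limit and with the specialization $t = 0$. The cleanest route is probably to perform the $s \to 0$ L'H\^opital extension inside the finite vertex sum first, obtaining a function jointly real analytic in $(t, s)$ near the origin, and only afterwards to send $\ep \to 0$; with the cancellation established, the vanishing on generic $s$ propagates to $s = 0$ for free.
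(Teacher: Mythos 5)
You should know at the outset that the paper does not prove this statement: it is posed as a conjecture, with only the odd-dimensional case settled (the corollary to Theorem \ref{Macdonald}, via $A_\P(0,0)=(-1)^d A_\P(0,0)$) and the two-dimensional triangle example offered as evidence. So your proposal is not an alternative route to the paper's proof; it is an attempt to close a problem the paper leaves open, and I will judge it on that basis. Your central identity is correct and is, I believe, the right structural reason for the conjecture: for generic $y$ one has $\sum_{\v}|\det\K(\v)|\big/\prod_{j}\langle\w_j(\v),y\rangle=0$, obtained exactly as you say by letting $t\to 0^{+}$ in Barvinok's identity for $t\P$ and using $|\hat{1}_{t\P}(y)|\le \mathrm{vol}(t\P)=t^{d}\,\mathrm{vol}(\P)\to 0$ (this is cleaner than invoking the identity theorem, and sidesteps the sign issue in $\hat{1}_{t\P}(y)=|t|^{d}\hat{1}_{\P}(ty)$ for $t<0$). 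I would also advise you to avoid the interchange of the vertex sum with the lattice sum: for irrational generators the individual vertex series $\sum_{m}\hat{\phi_\ep}(m+s)/\prod_j\langle\w_j(\v),m+s\rangle$ face a small-denominators problem and need not converge absolutely. You can stay one line earlier in the paper's own derivation, where the vertex sum sits inside the lattice sum, and simply note that $\hat{1}_{0\cdot\P}(m+s)=\hat{1}_{\{0\}}(m+s)=0$ because the integral is over a single point; either way $A_\P(0,s)=0$ for every generic $s$, for every convex $\P$ (no simplicity or triangulation needed).

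The genuine remaining gap is the one you flag yourself: the passage from generic $s$ to $s=0$. Under the paper's convention $A_\P(t)=\lim_{s\to 0}A_\P(t,s)$, your computation gives $A_\P(0,0)=0$ \emph{provided that limit exists}, since a limit of the identically zero function along generic rays is zero. But the existence of $\lim_{s\to 0}A_\P(t,s)$ for real (irrational) polytopes is asserted in the paper by appeal to the cancellation of singularities in Barvinok's formula and is never actually proved; the required uniformity in $\ep$ of the L'H\^opital procedure is exactly where the analysis for irrational vertices is delicate (note that in the paper's own example the coefficient sums carry a divergent prefactor $\ep^{-1/2}$, so finiteness of the coefficients at $t=0$ is not automatic). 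In short: granting the analytic framework that Theorem \ref{Macdonald} itself rests on, your argument settles the conjecture, and the idea is sound; an unconditional proof would additionally require establishing the convergence and limit-interchange claims that the paper takes for granted.
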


To summarize, we have extended Macdonald's solid angle function $A_{\P}(t)$ to all
real polytopes $\P$ and all real dilations $t$, using $A_\P(t,s)$.    It now becomes an interesting question to look for special values of the discrete volume function  $A_\P(t,s)$ for various values
of $t$ and $s$.   

\bibliographystyle{plain}

 \sc     
 \noindent   Department of Mathematics, Physics, and Computer Science\\ 
          Georgetown College \\ 
          Georgetown, KY  40324  \\ 
 {\tt   David\underline{ }DeSario@georgetowncollege.edu  }\\

 \noindent  Department of Mathematics\\
 Temple University\\
 Philadelphia, PA 19122 \\
 {\tt srobins@temple.edu}

\end{document}